\documentclass[12pt]{article}
\usepackage{mycommands}
\usepackage{tikz}
\title{The Maximal Rank Conjecture for Sections of Curves}

\begin{document}
\maketitle

\begin{abstract}
Let $C \subset \pp^r$ be a general curve of genus $g$
embedded via a general linear series of degree $d$.
The \emph{Maximal Rank Conjecture} asserts that
the restriction maps $H^0(\oo_{\pp^r}(m)) \to H^0(\oo_C(m))$
are of maximal rank; this determines
the Hilbert function of $C$.

In this paper, we prove an analogous statement
for the union of hyperplane sections of general curves.
More specifically, if $H \subset \pp^r$ is a general hyperplane,
and $C_1, C_2, \ldots, C_n$ are general curves,
we show $H^0(\oo_{H}(m)) \to H^0(\oo_{(C_1 \cup C_2 \cup \cdots \cup C_n) \cap H}(m))$
is of maximal rank, except for some counterexamples when $m = 2$.

As explained in \cite{over}, this result plays a key role in the author's proof
of the Maximal Rank Conjecture \cite{mrc}.
\end{abstract}

\section{Introduction}

Let $\mathcal{H}_{d, g, r}$ denote the Hilbert scheme
classifying subschemes of $\pp^r$ with Hilbert polynomial $P(x) = dx + 1 - g$.
We have a natural rational map
from any component of $\mathcal{H}_{d, g, r}$ whose general
member is a smooth curve to the moduli space $M_g$ of curves.
The Brill--Noether theorem
asserts that there exists such a component whose general member is nondegenerate and that dominates $M_g$
if and only if
\[\rho(d, g, r) := (r + 1)d - rg - r(r + 1) \geq 0.\]
Moreover, it is known that when $\rho(d, g, r) \geq 0$,
there exists a unique such component that dominates $M_g$.
We shall refer to a curve $C \subset \pp^r$
lying in this component as a
\emph{Brill--Noether Curve (BN-curve)}.

A natural first step in understanding the extrinsic
geometry of general curves is to understand their
Hilbert function. Here we have the
\emph{Maximal Rank Conjecture}:

\begin{conj}[Maximal Rank Conjecture]
If $C$ is a general BN-curve and $m$ is a positive integer,
then the restriction map
\[H^0(\oo_{\pp^r}(m)) \to H^0(\oo_C(m))\]
is of maximal rank.
\end{conj}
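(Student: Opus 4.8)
The natural plan is to deduce the Maximal Rank Conjecture from the section theorem of this paper by an induction that removes one variable at a time: a statement about degree-$m$ hypersurfaces in $\pp^r$ is reduced to a statement about degree-$m$ hypersurfaces in $\pp^{r-1}$ (the hyperplane section) together with a statement about degree-$(m-1)$ hypersurfaces in $\pp^r$. First I would reduce to producing a single well-chosen curve. Maximal rank of the restriction map is an open condition on the unique component of $\mathcal{H}_{d,g,r}$ dominating $M_g$, and that component is irreducible, so it suffices to exhibit one curve $C_0$ in it --- allowed to be reducible and nodal, assembled from rational normal curves and BN-curves of lower genus or in smaller projective spaces, glued along points --- for which $H^0(\oo_{\pp^r}(m)) \to H^0(\oo_{C_0}(m))$ has the asserted rank, together with a companion bound on the ``wrong'' group $H^1(\mathcal{I}_{C_0}(m))$. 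One must also check, by a dimension count on the Hilbert scheme, that such a $C_0$ smooths to a genuine BN-curve.

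The inductive engine is the restriction sequence
\[ 0 \longrightarrow \mathcal{I}_{C}(m-1) \longrightarrow \mathcal{I}_{C}(m) \longrightarrow \mathcal{I}_{(C\cap H)/H}(m) \longrightarrow 0 \]
attached to a general hyperplane $H$. Passing to cohomology, the corank $h^0(\mathcal{I}_C(m))$ of the degree-$m$ restriction map for $C$ is governed by $h^0(\mathcal{I}_C(m-1))$, by $h^1(\mathcal{I}_C(m-1))$, and by $h^0(\mathcal{I}_{(C\cap H)/H}(m))$; this last term is exactly what the section theorem computes, since a general hyperplane section of a general BN-curve is a general configuration of $d$ points on $H \cong \pp^{r-1}$ of the relevant kind. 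Feeding in the inductive hypothesis in degree $m-1$ --- both the rank statement and the accompanying $H^1$-bound --- one then reads off the rank in degree $m$, provided the three contributions line up; in the range where the degree-$m$ map is expected to be surjective this is automatic, while in the expected-injective range one instead argues directly on a degeneration of $C$, or restricts to a quadric rather than a hyperplane, which for $\pp^3$ is precisely where the quadric-section theorem of the abstract enters.

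For the base cases, $m = 1$ is immediate (a nondegenerate curve satisfies no linear equation, so the map is injective), and $r \leq 2$ is elementary: for $\pp^2$ the component dominating $M_g$ consists only of conics, plane cubics, and plane quartics, for each of which the restriction map is surjective. The substantive base case is $m = 2$, the count of quadrics through $C$. Here I would either iterate hyperplane sections down to a point count or, in $\pp^3$, restrict to a quadric surface so as to drop two degrees at once, in both cases using the section and quadric-section theorems; the finitely many small triples $(d,g,r)$, and the residual cases where the section theorem has its $m=2$ exceptions, would be handled by explicit constructions or by machine computation.

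The step I expect to be hardest is propagating the $H^1$-information through this double induction. Maximal rank in degree $m$ forces one of $H^0(\mathcal{I}_{C_0}(m))$, $H^1(\mathcal{I}_{C_0}(m))$ to vanish, but along a degeneration both can become nonzero simultaneously, and the inductive step above only closes if the degeneration --- a Horace-style specialization of $C_0$, or of the auxiliary hyperplane or quadric --- is chosen delicately enough to kill the unwanted group. Proving that such a choice always exists, outside a controlled list of numerical exceptions, is where essentially all of the work lies; the section theorem of this paper is exactly the input that makes the reduction from $\pp^r$ to $\pp^{r-1}$ in that argument available.
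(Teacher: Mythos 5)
There is a fundamental problem here: the statement you have been asked about is the \emph{Maximal Rank Conjecture}, which this paper states precisely as a conjecture and does not prove. The paper's actual theorems concern only the hyperplane section $C \cap H$ (and quadric sections in $\pp^3$), not the curve $C$ itself, so there is no proof in the paper to compare yours against --- and your proposal, by your own admission in its final paragraph, is a plan rather than a proof.

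The gap in your plan is exactly the reason the statement remains a conjecture. The restriction sequence
\[
0 \to \mathcal{I}_{C}(m-1) \to \mathcal{I}_{C}(m) \to \mathcal{I}_{(C\cap H)/H}(m) \to 0
\]
only closes the induction when the relevant cohomology groups of the two outer terms vanish simultaneously, and the numerics do not cooperate in the transition range. Concretely: to conclude $H^0(\mathcal{I}_C(m)) = 0$ you need both $H^0(\mathcal{I}_C(m-1)) = 0$ and $H^0(\mathcal{I}_{(C\cap H)/H}(m)) = 0$; the latter (by the paper's theorem) requires $d \geq \binom{m+r-1}{r-1}$ and the former requires $(m-1)d + 1 - g \geq \binom{m+r-1}{r}$, and neither is implied by the hypothesis $md + 1 - g \geq \binom{m+r}{r}$ under which surjectivity in degree $m$ is expected. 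Dually, to conclude $H^1(\mathcal{I}_C(m)) = 0$ from the sequence you would need $H^1(\mathcal{I}_C(m-1)) = 0$, i.e.\ surjectivity already in degree $m-1$, which is false in the expected-injective range. So for each fixed $(d,g,r)$ there is a window of degrees $m$ --- precisely where the map transitions from injective to surjective --- in which the hyperplane section theorem gives no information, and no choice of Horace-style specialization within this framework is known to bridge it. Your closing paragraph identifies this as ``where essentially all of the work lies,'' and that is correct: it is all of the work, it is not done here, and the paper is explicit that it only establishes the section-level statement. (A smaller inaccuracy: your claimed base case for $\pp^2$ is wrong, since BN-curves in the plane exist in all degrees $d$ with $\rho(d,g,2) \geq 0$, not just $d \leq 4$; but this is immaterial next to the main gap.)
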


\begin{rem} Since $H^1(\oo_C(m)) = 0$ for $m \geq 2$
when $C$ is a general BN-curve,
the Maximal Rank Conjecture completely determines the
Hilbert function of $C$.
\end{rem}

In this paper, we study a related question for hyperplane sections.
Namely, we prove that the general hyperplane section
of a general union of BN-curves imposes the expected number of conditions
on hypersurfaces of every degree,
apart from a few counterexamples that occur for quadric
hypersurfaces.
Both the results and the techniques
developed here play a critical role in the author's proof of the Maximal Rank Conjecture \cite{mrc},
as explained in \cite{over}.
More precisely, we prove:

\begin{thm}[Hyperplane Maximal Rank Theorem] \label{main}
If $C_1, C_2, \ldots, C_n$ are independently general BN-curves
of degrees $d_i$ and genera $g_i$, and
$H \subset \pp^r$ is a general hyperplane,
and $m$ is a positive integer,
then the restriction map
\[H^0(\oo_H(m)) \to H^0(\oo_{(C_1 \cup C_2 \cup \cdots \cup C_n) \cap H}(m))\]
is of maximal rank, except possibly when
$m = 2$ and $d_i < g_i + r$ for some $i$.
\end{thm}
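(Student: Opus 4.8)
\section*{Proof proposal}

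The plan is to recast the statement as one about the Hilbert function of the finite scheme $\Gamma := C \cap H \subset H \cong \pp^{r-1}$, and then prove it by degenerating $C$. Since $\Gamma$ is a reduced set of $d$ points, $h^0(\oo_\Gamma(m)) = d$ for every $m \geq 1$, so the restriction map in the theorem has maximal rank exactly when $\Gamma$ imposes the expected number $\min\{\binom{m+r-1}{r-1},\, d\}$ of conditions on degree-$m$ hypersurfaces of $\pp^{r-1}$, i.e. when $h^0$ of the ideal sheaf $\mathcal{I}_{\Gamma/H}(m)$ is as small as it can be. The configurations $\Gamma$ arising from pairs (general BN-curve, general hyperplane) form an irreducible family, and ``$h^0(\mathcal{I}_{\Gamma/H}(m))$ is minimal'' is an open condition on it; hence it suffices, for each $(m, d, g, r)$ outside the excluded range $\{m = 2,\ d < g+r\}$, to produce a single flat limit of such $\Gamma$'s — obtained from a suitable degeneration of $C$ — for which the expected conditions are imposed, and then invoke semicontinuity.

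It is worth isolating first why $m = 2$ is special. Restricting the exact sequence $0 \to \mathcal{I}_C(-1) \to \mathcal{I}_C \to \mathcal{I}_{\Gamma/H} \to 0$ (valid because the general $H$ is transverse to $C$) and twisting gives $0 \to \mathcal{I}_C(m-1) \to \mathcal{I}_C(m) \to \mathcal{I}_{\Gamma/H}(m) \to 0$, with transition maps given by multiplication by the equation of $H$; its cohomology sequence involves $H^2(\mathcal{I}_C(m-1)) \cong H^1(\oo_C(m-1))$. For $m \geq 3$ this group vanishes, so no anomaly occurs; for $m = 2$ the term $H^1(\oo_C(1))$ appears, and since $h^0(\oo_C(1)) = r+1$, Riemann--Roch gives $h^1(\oo_C(1)) = g + r - d$, which is nonzero precisely when $d < g + r$. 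In that case $H^1(\mathcal{I}_{\Gamma/H}(2)) \neq 0$, which is the source of the counterexamples; thus $m = 2$ with $d \geq g+r$ may be handled together with the generic case, and one simply omits $m = 2$, $d < g+r$.

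For the main induction I would degenerate the BN-curve $C$ of invariants $(d,g)$ to a nodal curve $C^\circ = C' \cup L$, where $L$ is a line meeting a BN-curve $C'$ of smaller invariants in one or two points, arranged — using the known attaching results for BN-curves — so that $C'$ is again a general BN-curve. Intersecting with the fixed general $H$ (transverse to $C^\circ$ and missing its nodes) realizes $\Gamma$ as the flat limit of $\Gamma' \sqcup \{q\}$, with $\Gamma' = C' \cap H$ a general hyperplane section of $C'$ and $q = L \cap H$. When $L$ is a $1$-secant line the point $q$ is genuinely general, and the elementary ``adding a general point'' principle — a general point imposes an independent condition on the degree-$m$ forms through $\Gamma'$ whenever there are any — promotes the expected Hilbert function of $\Gamma'$ to that of $\Gamma' \sqcup \{q\}$ in every degree at once; with the semicontinuity of the first paragraph this closes the induction wherever a $1$-secant reduction stays inside the BN locus, i.e. where $\rho(d,g,r) \geq r+1$. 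The remaining base cases of small invariants — rational normal curves, where $\Gamma$ is a set of points in linearly general position, together with the other minimal-$\rho$ cases — are checked by hand, and the companion result for a space curve meeting a quadric is obtained by an entirely parallel argument and can serve as an input when the degenerations are pushed into $\pp^3$.

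The crux is the low-$\rho$ range, roughly $0 \leq \rho(d,g,r) < r+1$, where no $1$-secant reduction exists and one must instead peel off a $2$-secant (or higher-secant) line; then the added point $q$ is constrained to lie on a secant line of $C'$ rather than being general, and the naive ``add a point'' lemma can fail. The fix is to strengthen the inductive hypothesis: one carries along not merely the expected Hilbert function of $\Gamma$ but also enough control of the position of $\Gamma$ relative to the linear spans of its subsets (equivalently, one runs a Horace-type argument in which the extra points are specialized onto linear sections of decreasing dimension), so that the constrained point still imposes the expected condition. Assembling a collection of such degenerations that reaches every $(d,g,r)$, verifying that the strengthened statement is preserved by each of them, and checking that the genuine $m = 2$ exceptions do not propagate to higher degrees, is where essentially all of the work lies.
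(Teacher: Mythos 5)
Your setup --- recasting the theorem as a statement about the Hilbert function of $\Gamma = C\cap H$, using irreducibility of the family of pairs plus semicontinuity to reduce to exhibiting one good degeneration, and the cohomological explanation of why $m=2$, $d<g+r$ is excluded --- is sound and consistent with the paper. But the inductive engine you propose is not the one that proves the theorem, and the gap is not a detail you have deferred but the entire content of the argument. Peeling off a secant line reduces only $d$; it never reduces $m$ or $r$, so the induction can only terminate at configurations you can analyze directly, and there are infinitely many such stalling points. Removing a $1$-secant line costs $r+1$ in $\rho$ and a $2$-secant line costs $1$, so every curve with $\rho(d,g,r)=0$ (e.g.\ $(d,g)=(kr,(k-1)(r+1))$ for all $k$) admits no line-peeling at all while remaining Brill--Noether general; these are not a finite list of base cases to be ``checked by hand.'' Moreover, even where a $2$-secant reduction is available, the added point $q=L\cap H$ lies on the secant variety of $C'$, which for $r\ge 4$ is a proper (at most $3$-dimensional) subvariety of $\pp^r$, so the ``general point imposes an independent condition'' principle does not apply and you have supplied no substitute. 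Your final paragraph concedes that ``essentially all of the work'' lies in an unspecified Horace-type strengthening; that strengthening is the theorem.

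The idea actually needed, and absent from your proposal, is to degenerate $C$ to $X\cup Y$ where $Y$ is a \emph{large} sub-curve contained in an auxiliary hyperplane $H'$ transverse to $H$. Then $Y\cap H$ lies in the hyperplane $H\cap H'$ of $H$ and is controlled by the same theorem one dimension down (induction on $r$), while the exact sequence
\[0 \to \mathcal{I}_{(X \cap H) / H}(m - 1) \to \mathcal{I}_{(C \cap H) / H}(m) \to \mathcal{I}_{(Y \cap H)/(H \cap H')}(m) \to 0\]
handles the residual piece $X\cap H$ in degree $m-1$ (induction on $m$). Making this work requires: (i) constructing such reducible BN-curves with $H^1(N_{X\cup Y})=0$ and $X\cap Y$ general for essentially all splittings $d=d_1+d_2$ (in the low-$\rho$ regime this is done by attaching $(r+2)$-secant rational normal curves of degree $r$, which cost nothing in $\rho$ --- not lines); (ii) a numerical argument that the required splittings exist with $d_1,d_2$ on the correct sides of $\binom{m+r-2}{m-1}$ and $\binom{m+r-2}{m}$; and (iii) genuinely separate base cases for $m=2$ and for $r=3$, the latter being a nontrivial Horace induction with plane conics applied to the constrained configurations (``conic collections'') that arise as plane sections of space curves. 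None of these appears in your sketch, and your claim that the quadric-section theorem follows ``by an entirely parallel argument'' likewise understates what must be proved there.
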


The conclusion that this restriction map
is of maximal rank can be reformulated in
terms of the cohomology of the twists of the ideal sheaf
as follows:
\begin{align*}
H^0(\mathcal{I}_{((C_1 \cup C_2 \cup \cdots \cup C_n) \cap H) / H}(m)) = 0 &\quad \text{when}\ \sum_{i = 1}^n d_i \geq \binom{m + r - 1}{r - 1}, \\
H^1(\mathcal{I}_{((C_1 \cup C_2 \cup \cdots \cup C_n) \cap H) / H}(m)) = 0 &\quad \text{when}\ \sum_{i = 1}^n d_i \leq \binom{m + r - 1}{r - 1}.
\end{align*}

In the course of proving Theorem~\ref{main},
we will also prove stronger results for $r = 3$ and for $r = 4$.
Namely:

\begin{thm} \label{add3}
Let $X \subset H \simeq \pp^2 \subset \pp^3$ be a
subscheme,
and $C \subset \pp^3$ be a general BN-curve.
\begin{itemize}
\item 
If $C$ is a canonical curve and $m = 2$,
suppose that $X$ is nonempty.
\item If $C$ is a canonical curve and $m \neq 2$,
write $\Lambda \subset H$ for a general line, and
suppose that the restriction maps
\begin{gather*}
H^0(\oo_H(m)) \to H^0(\oo_X(m)) \\
H^0(\oo_H(m - 1)) \to H^0(\oo_X(m - 1)) \\
H^0(\oo_\Lambda(m)) \to H^0(\oo_{X \cap \Lambda}(m))
\end{gather*}
are of maximal rank, with either the second one an injection,
or the third one a surjection with kernel of dimension at least $4$.
\item Otherwise, suppose the map
\[H^0(\oo_H(m)) \to H^0(\oo_{X}(m))\]
is of maximal rank.
\end{itemize}
Then the map
\[H^0(\oo_H(m)) \to H^0(\oo_{X \cup (C \cap H)}(m))\]
is of maximal rank.
\end{thm}

\begin{thm} \label{add4}
Let $X \subset H \simeq \pp^3 \subset \pp^4$ be a
subscheme,
and $C \subset \pp^4$ be a general BN-curve of degree $d$ and genus $g$.
\begin{itemize}
\item If $(d, g) \in \{(8, 5), (9, 6), (10, 7)\}$ and $m = 2$,
suppose that $X$ is either positive dimensional or of degree at least $11 - d$.
\item If $(d, g) \in \{(8, 5), (9, 6), (10, 7)\}$ and $m \neq 2$,
write $\Lambda \subset H$ for a general plane, and
suppose that the restriction maps
\begin{gather*}
H^0(\oo_H(m)) \to H^0(\oo_X(m)) \\
H^0(\oo_H(m - 1)) \to H^0(\oo_X(m - 1)) \\
H^0(\oo_\Lambda(m)) \to H^0(\oo_{X \cap \Lambda}(m))
\end{gather*}
are of maximal rank, with either the second one an injection,
or the third one a surjection with kernel of dimension at least $8$.
\item Otherwise, suppose the map
\[H^0(\oo_H(m)) \to H^0(\oo_{X}(m))\]
is of maximal rank.
\end{itemize}
Then the map
\[H^0(\oo_H(m)) \to H^0(\oo_{X \cup (C \cap H)}(m))\]
is of maximal rank.
\end{thm}

We shall prove Theorem~\ref{main} using an inductive
approach due originally to Hirschowitz \cite{mrat}. In its simplest form,
suppose that $C = X \cup Y$ is a reducible curve
such that $Y$ is contained in some hyperplane $H'$:

\begin{center}
\begin{tikzpicture}[scale=0.7]
\draw[thick] (0, 1) -- (1, 3) -- (9, 3) -- (8, 1) -- (0, 1);
\draw[thick] (0, 4) -- (1, 6) -- (9, 0) -- (8, -2) -- (0, 4);
\draw (2, 2) .. controls (2, 6) and (5, 6) .. (5, 2);
\draw (2, 2) .. controls (2, 0) and (3, 0) .. (3, 2);
\draw (4, 2) .. controls (4, 0) and (5, 0) .. (5, 2);
\draw (3, 2) .. controls (3, 4) and (4, 4) .. (4, 2);
\draw (6, 2.5) .. controls (3, 2.5) and (3, 1.5) .. (6, 1.5);
\draw (6, 2.5) .. controls (8, 2.5) and (8, 1.5) .. (7, 1.5);
\draw (6, 1.5) .. controls (7, 1.5) and (7, 2) .. (6.5, 2);
\draw (7, 1.5) .. controls (6, 1.5) and (6, 2) .. (6.5, 2);
\draw (9.1, 2.5) node{$H'$};
\draw (9.1, -0.5) node{$H$};
\draw (5, 4.1) node{$X$};
\draw (7.65, 2.5) node{$Y$};
\end{tikzpicture}
\end{center}

\noindent
Then we have the exact sequence of sheaves
\[0 \to \mathcal{I}_{(X \cap H) / H}(m - 1) \to \mathcal{I}_{(C \cap H) / H}(m) \to \mathcal{I}_{(Y \cap H)/(H \cap H')}(m) \to 0,\]
which gives rise to a long exact sequence in cohomology
\[\cdots \to H^i(\mathcal{I}_{(X \cap H) / H}(m - 1)) \to H^i(\mathcal{I}_{(C \cap H) / H}(m)) \to H^i(\mathcal{I}_{(Y \cap H)/(H \cap H')}(m)) \to \cdots.\]
Consequently, we can deduce the hyperplane maximal rank theorem for the
general hyperplane section of $C$ from the hyperplane maximal rank theorem
for the general hyperplane sections of $X$ and $Y$.

The structure of this paper is as follows.
First, in Section~\ref{defc}, we give several
methods of constructing reducible BN-curves
that will be useful for specialization
arguments later on.
In Sections~\ref{r} and~\ref{m}, we prove the
hyperplane maximal rank theorem in the special cases
$r = 3$ and $m = 2$ respectively.
We then deduce the general case in Sections~\ref{sec:glue}
and~\ref{sec:ind} via the above inductive argument,
by finding appropriate BN-curves
$X \subset \pp^r$
and $Y \subset H' \subset \pp^r$
satisfying the hyperplane maximal rank theorem for $(m - 1, r)$ and
$(m, r - 1)$ respectively.

\medskip

\textit{Notational Convention}: We say a BN-curve $X \subset \pp^r$ is \emph{nonspecial}
if $d \geq g + r$, i.e.\ if $X$ is a \emph{limit} of curves with nonspecial
hyperplane section.

\subsection*{Acknowledgements}
I would like to thank Professor Joe Harris for valuable comments
and discussions. This research was supported by the Harvard PRISE
and Herchel-Smith fellowships.

\section{\label{defc} Some Gluing Lemmas}

In this section, we will give some lemmas that
let us construct examples of BN-curves.

\begin{lm} \label{glone}
Let $X \subset \pp^r$ be a curve with $H^1(N_X) = 0$,
and $D$ be a rational normal curve of degree $d \leq r$ that
is $k$-secant to $X$, where
\[k \leq \begin{cases}
d + 1 & \text{if $d < r$;} \\
r + 2 & \text{if $d = r$.}
\end{cases}\]
Then $X \cup D$ is smoothable and $H^1(N_{X \cup D}) = 0$.
Moreover, if $X$ is a BN-curve, then $X \cup D$ is a BN-curve.
\end{lm}
\begin{proof}
The vanishing of $H^1(N_{X \cup D})$ and smoothability
of $X \cup D$ are consequences of Theorem~4.1 of \cite{hh}
(via the same argument as Corollary~4.2 of \cite{hh}),
together with the fact that
\[N_D = \oo_{\pp^1}(d)^{\oplus(r - d)} \oplus \oo_{\pp^1}(d + 2)^{\oplus (d - 1)}.\]

Now assume $X$ is a BN-curve.
To show that $X \cup D$ is a BN-curve, we just need to count
the dimension of the space of embeddings of $X \cup D$ into projective
space (this suffices because there is a unique component of the
Hilbert scheme that dominates $M_g$).
In order to do this, first note that
\[\rho(X \cup D) = \rho(X) + (r + 1)d - r(k - 1).\]
Consequently, the verification
that $X \cup D$ is a BN-curve boils down to the following two assertions,
both of which are straight-forward to check:
\begin{enumerate}
\item Given a $\pp^1$ with $k \leq d + 1$ marked points,
the family of degree $d$ embeddings of $\pp^1$ as a rational
normal curve with given values at the marked points
has dimension
\[(r - d)(d - k + 1) + d(d + 2 - k) = (r + 1)d - r(k - 1).\]
\item Given a $\pp^1$ with $r + 2$ marked points,
there is a unique embedding of $\pp^1$ as a rational normal
curve of degree $r$ with given values at all marked points.
\end{enumerate}
This completes the proof.
\end{proof}

\begin{lm} \label{glonered}
Let $X \subset \pp^r$ be a curve with $H^1(N_X) = 0$,
and $R$ be a rational normal curve of degree $r - 1$
that is $(r + 1)$-secant to $X$, and $L$ be a line
that is $1$-secant to both $X$ and $R$.
Then $H^1(N_{X \cup R \cup L}) = 0$.
\end{lm}
\begin{proof}
Note that for curves $A$ and $B$,
\[ H^1(N_{A \cup B}|_A) = 0 \tand H^1(N_{A \cup B}|_B(-A \cap B)) = 0 \quad \Rightarrow \quad H^1(N_{A \cup B}) = 0;\]
indeed, this holds for $N_{A \cup B}$ replaced
by any vector bundle.

In particular, since
$N_A$ is a subbundle of full rank in $N_{A \cup B}|_A$,
we can conclude that $H^1(N_{A \cup B}) = 0$
provided that
\begin{align*}
H^1(N_A) = 0 &\tand H^1(N_{A \cup B}|_B(-A \cap B)) = 0, \\
\text{or respectively} \quad H^1(N_{A \cup B}|_A) = 0 &\tand H^1(N_B(-A \cap B)) = 0.
\end{align*}
Thus, the vanishing of $H^1(N_{X \cup R \cup L})$ follows from
the following facts:
\begin{align*}
H^1(N_X) &= 0 \\[-0.5ex]
H^1(N_{R \cup L}|_R(-X \cap R)) &= H^1(\oo_{\pp^1}^{\oplus (r - 2)} \oplus \oo_{\pp^1}(-1)) = 0. \\
H_1(N_L(-L \cap (X \cup R))) &= H^1(\oo_{\pp^1}(-1)) = 0. \qedhere
\end{align*}
\end{proof}

\begin{lm} \label{gltwo}
Let $X \subset \pp^r$ be a curve with $H^1(N_X) = 0$,
and $L$ be a line $3$-secant to $X$.
Assume that the tangent lines to $X$ at the three points
of intersection do not all lie in a plane.
Then $X \cup D$ is smoothable and $H^1(N_{X \cup D}) = 0$.
\end{lm}
\begin{proof}
See Remark 4.2.2 of \cite{hh}.
\end{proof}

%\begin{defi} A \emph{defining curve} is a curve
%of the form $\{L_i\} \cup \{M_i\} \cup \{R_i\}$,
%where the $L_i$ and $M_i$ are lines, and the $R_i$
%are rational normal curves,
%subject to the following requirements:
%\begin{enumerate}
%\item Each $L_i$ intersects a unique $R_j$, as a $1$-secant line.
%\item Each $M_i$ intersects a unique $R_j$, as a $2$-secant line.
%\item We have
%\[\#(R_i \cap R_j) = \begin{cases}
%r + 2 & \text{if $|i - j| = 1$;} \\
%0 & \text{if $|i - j| > 1$.}
%\end{cases}\]
%\end{enumerate}
%We call $(\#\{L_i\}, \#\{M_i\}, \#\{R_i\})$ the \emph{signature}
%of the defining curve.
%\end{defi}
%
%If $X$ is a defining curve with signature $(\ell, m, n)$,
%then its degree is $\ell + m + rn$, and its genus is $m + (r + 1)(n - 1)$.
%In particular, for any $(d, g, r)$ with $\rho(d, g, r) \geq 0$,
%there is a defining curve of degree $d$ and genus $g$
%in $\pp^r$.
%
%\begin{cor} Let $X \subset \pp^r$ be a defining curve.
%Then $X$ is a smoothable BN-curve for which $H^1(N_X) = 0$.
%\end{cor}

We end this section with two simple observations,
that will be used several times in the remainder of the paper
and will therefore be useful to spell out.

\begin{lm} \label{obvious}
Let $\mathcal{X}$ and $\mathcal{Y}$ be 
irreducible families of curves in $\pp^r$, sweeping out subvarieties
$\bar{\mathcal{X}}, \bar{\mathcal{Y}} \subset \pp^r$
of codimension at most one.
Let $X$ and $Y$ be
specializations of $\mathcal{X}$ and $\mathcal{Y}$ respectively,
such that $X \cup Y$
is a BN-curve with $H^1(N_{X \cup Y}) = 0$, and $X \cap Y$
is quasi-transverse and general in $\bar{\mathcal{X}} \cap \bar{\mathcal{Y}}$.

Then there are simultaneous generalizations $X'$ and $Y'$ of
$X$ and $Y$ respectively
such that $X' \cup Y'$ is a BN-curve with $\#(X \cap Y) = \#(X' \cap Y')$.
Equivalently, in more precise language, write
$B_1$ and $B_2$ for the bases of $\mathcal{X}$ and $\mathcal{Y}$
respectively. Then
we are asserting the existence of an irreducible $B \subset B_1 \times B_2$
dominating both $B_1$ and $B_2$, such that any fiber
$(X', Y')$ of $(\mathcal{X} \times \mathcal{Y}) \times_{(B_1 \times B_2)} B$
satisfies the given conclusion.
\end{lm}
\begin{proof}
As $\bar{\mathcal{Y}}$ has codimension at most one,
the intersection of any generalization $X'$
of $X$ with $\bar{\mathcal{X}} \cap \bar{\mathcal{Y}}$
contains a generalization of $X \cap Y$.
Similarly, the intersection of any generalization
$Y'$ of $Y$ with $\bar{\mathcal{X}} \cap \bar{\mathcal{Y}}$
contains a generalization of $X \cap Y$.
The existence of simultaneous generalizations $X'$ and $Y'$ of
$X$ and $Y$ respectively
with $\#(X \cap Y) = \#(X' \cap Y')$
thus follows from the generality of $X \cap Y$
in $\bar{\mathcal{X}} \cap \bar{\mathcal{Y}}$.

Moreover, since $H^1(N_{X \cup Y}) = 0$, the curve $X \cup Y$ is a smooth
point of the corresponding Hilbert scheme; consequently, any
generalization $X' \cup Y'$ of $X \cup Y$ is a BN-curve.
\end{proof}

\begin{lm} \label{addsub} Let $S \subset \pp^r$ and $T \subset \pp^r$ be sets
of points such that the restriction maps
\[H^0(\oo_{\pp^r}(m)) \to H^0(\oo_S(m)) \tand H^0(\oo_{\pp^r}(m)) \to H^0(\oo_{S \cup T}(m))\]
are of maximal rank. Then, for every integer $0 \leq n \leq \# T$,
there exists a subset $T' \subset T$ of cardinality $n$ such that
\[H^0(\oo_{\pp^r}(m)) \to H^0(\oo_{S \cup T'}(m))\]
is of maximal rank.

In particular, taking $T = \pp^r(\cc) \smallsetminus S$,
if $H^0(\oo_{\pp^r}(m)) \to H^0(\oo_S(m))$
is of maximal rank, then for $n$ general points $T' \subset \pp^r$,
the map $H^0(\oo_{\pp^r}(m)) \to H^0(\oo_{S \cup T'}(m))$
is also of maximal rank.
\end{lm}
\begin{proof}
We argue by induction on $n$. When $n = 0$, the conclusion holds by assumption.
When $n = 1$, we note that the conclusion is obvious if $H^0(\oo_{\pp^r}(m)) \to H^0(\oo_S(m))$
is injective or if $H^0(\oo_{\pp^r}(m)) \to H^0(\oo_{S \cup T}(m))$
is surjective.
We may therefore suppose that the map $H^0(\oo_{\pp^r}(m)) \to H^0(\oo_S(m))$
is surjective but not injective, whose kernel contains a nonzero polynomial $f$;
and that $H^0(\oo_{\pp^r}(m)) \to H^0(\oo_{S \cup T}(m))$ is injective.
In particular, there is a point $p \in T$ with $f|_p \neq 0$.
Taking $T' = \{p\}$,
the map
$H^0(\oo_{\pp^r}(m)) \to H^0(\oo_{S \cup T'}(m))$
is surjective by construction.

For the inductive step, let $T'' \subset T$ be of size $n - 1$
such that
$H^0(\oo_{\pp^r}(m)) \to H^0(\oo_{S \cup T''}(m))$
is of maximal rank.
Applying our inductive hypothesis with $(S, T) = (S \cup T'', T \smallsetminus T'')$
completes the proof.
\end{proof}

\section{\boldmath The Case $r = 3$ \label{r}}

In this section, we will prove Theorems~\ref{add3} and~\ref{add4}.
As a consequence of Theorem~\ref{add3}, we will deduce
that if $C_1, C_2, \ldots, C_n \subset \pp^3$ are independently
general BN-curves, then
\[H^0(\oo_H(m)) \to H^0(\oo_{(C_1 \cup C_2 \cup \cdots \cup C_n) \cap H}(m))\]
is of maximal rank, unless $n = 1$, and $C_1$ is a canonically embedded
curve of genus $4$, and $m = 2$.
(In which case by inspection the above map fails
to be of maximal rank.)

\begin{proof}[Proof of Theorem~\ref{add3}]
If $C$ is not a canonical curve, Theorem~1.5 of \cite{quadrics}
states that $C \cap H$ is a general set of points,
and so Lemma~\ref{addsub} yields the desired result.

If $C$ is a canonical curve,
Theorem~1.5 of \cite{quadrics} states that $C \cap H$ is a set of $6$ points
which are general subject to the constraint that they lie on a conic.
In particular, $C \cap H$ imposes indepedent conditions on
$H^0(\oo_H(1))$ and on any fixed
proper subspace of $H^0(\oo_H(2))$.
Since $X$ is nonempty by assumption if $m = 2$, the kernel
of $H^0(\oo_H(m)) \to H^0(\oo_X(m))$
is a proper subspace of $H^0(\oo_H(m))$ if $m = 2$.
If $m \leq 2$, we therefore conclude that
\[H^0(\oo_H(m)) \to H^0(\oo_{X \cup (C \cap H)}(m))\]
is injective (so in particular of maximal rank as desired).

If $m \geq 3$,
we specialize the conic to the union of two lines,
and the points of $C \cap H$ to consist of $2$ points on one line
(which is just a set of $2$ general points),
and $4$ points on the other.
Using our assumption that
$H^0(\oo_H(m)) \to H^0(\oo_X(m))$
is of maximal rank and
applying Lemma~\ref{addsub} twice, it suffices
to show
\[H^0(\oo_H(m)) \to H^0(\oo_{X \cup Y}(m))\]
is of maximal rank, where $Y$ is a set of $\max(4, \dim \ker H^0(\oo_\Lambda(m)) \to H^0(\oo_{X \cap \Lambda}(m)))$ points
which are general subject to the condition that they lie on a line $\Lambda$.
For this, we use the exact sequence
\[0 \to \mathcal{I}_X(m - 1) \to \mathcal{I}_{X \cup Y}(m) \to \mathcal{I}_{Y/\Lambda}(m) \to 0.\]
Note that $H^0(\mathcal{I}_{Y/\Lambda}(m)) = 0$,
and if
$\dim \ker H^0(\oo_\Lambda(m)) \to H^0(\oo_{X \cap \Lambda}(m)) \geq 4$,
then we have $H^1(\mathcal{I}_{Y/\Lambda}(m)) = 0$ too.
In particular, the associated long exact sequence in cohomology
implies $H^0(\mathcal{I}_{X \cup Y}(m)) = 0$ provided that
$H^0(\mathcal{I}_X(m - 1)) = 0$,
and similarly for $H^1$ if
$\dim \ker H^0(\oo_\Lambda(m)) \to H^0(\oo_{X \cap \Lambda}(m)) \geq 4$.

Our assumption that
$H^0(\oo_H(m - 1)) \to H^0(\oo_X(m - 1))$
is of maximal rank
and injective if
$\dim \ker H^0(\oo_\Lambda(m)) \to H^0(\oo_{X \cap \Lambda}(m)) < 4$
thus implies that
$H^0(\oo_H(m)) \to H^0(\oo_{X \cup Y}(m))$
is of maximal rank, as desired.
\end{proof}

\begin{proof}[Proof of Theorem~\ref{add4}]
If $(d, g) \notin \{(8, 5), (9, 6), (10, 7)\}$,
Theorem~1.6 of \cite{quadrics}
states that $C \cap H$ is a general set of points,
and so Lemma~\ref{addsub} yields the desired result.

If $(d, g) \in \{(8, 5), (9, 6), (10, 7)\}$, then
Theorem~1.5 of \cite{quadrics} states that $C \cap H$ is a
general complete intersection of $3$ quadrics, a general set of $9$ points
on a complete intersection of $2$ quadrics, or a general set of $10$
points on a quadric, respectively.
In particular, we may specialize $C \cap H$ to consist of $8$ points
which are a general complete intersection of $3$ quadrics,
together with $d - 8$ independently general points.
Applying Lemma~\ref{addsub}, it suffices to show the result
when $(d, g) = (8, 5)$ and $C \cap H$ is a general complete intersection
of $3$ quadrics.

In particular $C \cap H$ imposes indepedent conditions on
$H^0(\oo_H(1))$ and on any fixed
subspace of $H^0(\oo_H(2))$ of codimension at least $3$.
Since any subscheme of $\pp^3$ of positive dimension or of degree at least $3$
imposes at least $3$ conditions on
quadrics, if $m \leq 2$
we therefore conclude that
\[H^0(\oo_H(m)) \to H^0(\oo_{X \cup (C \cap H)}(m))\]
is injective (so in particular of maximal rank as desired).

If $m \geq 3$,
we claim we may
further specialize $C \cap H$
to $8$ general points in a plane.
To see this, take a general set $\Gamma$ of $8$ points in a plane.
Then there is a smooth plane cubic curve $E$ containing $\Gamma$.
Let $p \in E$ be a point so that $\oo_E(2)(2p) \simeq \oo_E(\Gamma)$.
Choose a basis $\langle f_1, f_2, f_3 \rangle$ for $H^0(\oo_E(1))$,
so that $E \subset \pp^2$ is embedded via $[f_1 : f_2 : f_3]$,
and let $f_0$ be an extension to a basis of $H^0(\oo_E(1)(p))$.
Then for $\lambda$ generic,
the image of $\Gamma$ in $\pp^3$
under $[\lambda f_0 : f_1 ; f_2 : f_3]$
is a set of $8$ points on the image of $E$,
with class twice the pullback to $E$
under this embedding of the hyperplane class in $\pp^3$ --- in particular,
as $E$ is the complete intersection of two quadrics and is projectively
normal, is a complete intersection of $3$ quadrics in $\pp^3$.
Specializing $\lambda \to 0$, we obtain the set $\Gamma$
of $8$ general points in the plane that we started with.

Using our assumption that
$H^0(\oo_H(m)) \to H^0(\oo_X(m))$
is of maximal rank and
applying Lemma~\ref{addsub}, it suffices
to show
\[H^0(\oo_H(m)) \to H^0(\oo_{X \cup Y}(m))\]
is of maximal rank, where $Y$ is a set of
$\max(8, \dim \ker H^0(\oo_\Lambda(m)) \to H^0(\oo_{X \cap \Lambda}(m)))$ points
which are general subject to the condition that they lie on a plane $\Lambda$.

Note that $H^0(\mathcal{I}_{Y/\Lambda}(m)) = 0$,
and if
$\dim \ker H^0(\oo_\Lambda(m)) \to H^0(\oo_{X \cap \Lambda}(m)) \geq 8$,
then we have $H^1(\mathcal{I}_{Y/\Lambda}(m)) = 0$ too.
In particular, the associated long exact sequence in cohomology
implies $H^0(\mathcal{I}_{X \cup Y}(m)) = 0$ provided that
$H^0(\mathcal{I}_X(m - 1)) = 0$,
and similarly for $H^1$ if
$\dim \ker H^0(\oo_\Lambda(m)) \to H^0(\oo_{X \cap \Lambda}(m)) \geq 8$.

Our assumption that
$H^0(\oo_H(m - 1)) \to H^0(\oo_X(m - 1))$
is of maximal rank
and injective if
$\dim \ker H^0(\oo_\Lambda(m)) \to H^0(\oo_{X \cap \Lambda}(m)) < 8$
thus implies that
$H^0(\oo_H(m)) \to H^0(\oo_{X \cup Y}(m))$
is of maximal rank, as desired.
\end{proof}

\begin{cor} \label{cor:r}
If $C_1, C_2, \ldots, C_n$ are independently general space BN-curves,
$H \subset \pp^3$ is a general hyperplane,
and $m$ is a positive integer,
then the restriction map
\[H^0(\oo_H(m)) \to H^0(\oo_{(C_1 \cup C_2 \cup \cdots \cup C_n) \cap H}(m))\]
is of maximal rank, except if $m = 2$ and $n = 1$ and $C_1$
is a canonical curve.
\end{cor}
\begin{proof}
Applying Theorem~\ref{add3}, we immediately see all
cases of this statement by induction (starting with $n = 0$ as our base case),
provided we check the case when
$m = 3$ and $n = 2$ and $C_1$ and $C_2$ are both canonical curves.
In this case, by Theorem~1.5 of~\cite{quadrics}
$(C_1 \cup C_2) \cap H$ is a collection of $12$
points which are general subject to the condition
that $6$ of them lie on conic $Q_1$ and the other $6$ lie on a conic $Q_2$;
we want to show such the general such subscheme does not lie on any cubics.

For this, we specialize one of the points on $Q_1$ to one of the
points of intersection $Q_1 \cap Q_2$,
and one the points on $Q_2$ to a differnt point of intersection  $Q_1 \cap Q_2$.
The resulting subscheme of degree $12$ meets $Q_1$ in $7$ points,
but a cubic not containing $Q_1$ can only meet $Q_1$ in $6$ points
by Bezout's theorem.
Any such cubic must therefore contain $Q_1$, and symmetrically $Q_2$.
But $Q_1 \cup Q_2$ is of degree $4$, so is contained in no cubics, as desired.
\end{proof}

\section{\boldmath The Case $m = 2$ \label{m}}

In this section, we will prove the hyperplane maximal
rank theorem when $m = 2$, and the curves $C_i$ are all nonspecial. We will begin by constructing
reducible curves with the following lemma,
to which we will apply the method of Hirschowitz
outlined in the introduction.

\begin{lm} \label{foo}
Let $H' \subset \pp^r$ be a hyperplane, and
$(d, g)$ be integers with $d \geq g + r$ and $g \geq 0$.
Assume $d_1$ and $d_2$ are nonnegative integers
with $d = d_1 + d_2$.
Then there exist curves
$X \subset \pp^r$ and 
$Y \subset H'$, of degrees $d_1$ and $d_2$ respectively,
both of which are either nonspecial BN-curves, rational normal curves,
or empty;
with $X \cap Y$ general, such that
$X \cup Y \subset \pp^r$ is a nondegenerate BN-curve
of genus $g$ with $H^1(N_{X \cup Y}) = 0$.
\end{lm}
\begin{proof}
We argue by induction on $d$ (which satisfies $d \geq r$).
For the base case, we take $d = r$, which forces $g = 0$.
We may then let $X$ and $Y$ be rational normal curves
of degrees $d_1$ and $d_2$ respectively,
meeting at one point; this gives 
a BN-curve with $H^1(N_{X \cup Y}) = 0$
by Lemma~\ref{glone}.

For the inductive step,
we assume $d \geq r + 1$; in particular,
if $d_1 \leq 1$, then $d_2 \geq r$.
Define $g' = \max(0, g - 1)$ and
\[(d_1', d_2') = \begin{cases}
(d_1 - 1, d_2) & \text{if $d_1 \geq 2$;} \\
(d_1, d_2 - 1) & \text{else.}
\end{cases}\]
By our inductive hypothesis, there exists
curves $X' \subset \pp^r$ and 
$Y' \subset H'$, of degrees $d_1'$ and $d_2'$ respectively,
both of which are either nonspecial BN-curves, rational normal curves,
or empty;
with $X' \cap Y'$ general, such that
$X' \cup Y' \subset \pp^r$ is a nondegenerate BN-curve
of genus $g'$ with $H^1(N_{X' \cup Y'}) = 0$.

If $d_1 \geq 2$ and $g = 0$, we take $X = X' \cup L$ for $L$ a general
$1$-secant line to $X'$, and $Y = Y'$; by Lemma~\ref{glone},
both $X$ and $X \cup Y$ are BN-curves, and $H^1(N_{X \cup Y}) = 0$.

Similarly if $d_1 \leq 1$ and $g = 0$ (respectively $g \geq 1$),
we take $X = X'$, and $Y = Y' \cup L$
for $L$ a general $1$-secant (respectively $2$-secant)
line to $Y'$; by Lemma~\ref{glone},
both $Y$ and $X \cup Y$ are BN-curves, and $H^1(N_{X \cup Y}) = 0$.

Finally, we consider the case $d_1 \geq 2$ and $g \geq 1$.
If $X'$ is nondegenerate, we take $X = X' \cup L$ for $L$ a general
$2$-secant line to $X$, and $Y = Y'$; by Lemma~\ref{glone},
both $X$ and $X \cup Y$ are BN-curves, and $H^1(N_{X \cup Y}) = 0$.
If $X'$ is degenerate, then since $X' \cup Y'$ is nondegenerate
by assumption, the general line $L$ meeting $X'$ and $Y'$
each once intersects $Y'$ in a point which is independantly
general from $X' \cap Y'$.
We then take
we take $X = X' \cup L$, and $Y = Y'$; again by Lemma~\ref{glone},
both $X$ and $X \cup Y$ are BN-curves, and $H^1(N_{X \cup Y}) = 0$.
\end{proof}

\noindent
Combining this with Lemma~\ref{obvious}, we obtain:

\begin{cor} \label{mns}
Let $C_1, C_2, \ldots, C_n \subset \pp^r$ be independantly general nonspecial BN-curves,
and $H' \subset \pp^r$ be a hyperplane.
Then we may specialize the $C_i$ to curves $X_i \cup Y_i$
such that $\sum \deg X_i$ and $\sum \deg Y_i$ are any two nonnegative
integers adding up to $\sum \deg C_i$;
and such that $X_1, X_2, \ldots, X_n \subset \pp^r$
and $Y_1, Y_2, \ldots, Y_n \subset H'$ are each sets of independantly general
BN-curves or rational normal curves.
\end{cor}

\begin{prop} \label{m2}
Let $C_1, C_2, \ldots, C_n \subset \pp^r$ be
independantly general BN-curves, and $H \subset \pp^r$
be a general hyperplane. Assume that $C_i$ is nonspecial for all $i$.
Then
\[H^0(\oo_H(2)) \to H^0(\oo_{(C_1 \cup C_2 \cup \cdots \cup C_n) \cap H}(2))\]
is of maximal rank.
\end{prop}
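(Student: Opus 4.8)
The plan is to reduce the statement, via the degeneration technique outlined in the introduction and Lemma~\ref{foo}, to the case $r = 3$ already handled in Corollary~\ref{cor:r}. Concretely, I would induct on $r$, the base case $r = 3$ being Corollary~\ref{cor:r} (note that for $m = 2$ the canonical genus-$4$ exception has $d = 6 = g + r - 1 < g + r$, so it does not occur under the nonspeciality hypothesis $d \geq g + r$). For the inductive step, write $C \subset \pp^r$ as a specialization $X \cup Y$ with $X$ a rational normal curve and $Y \subset H'$ a nonspecial BN-curve in a hyperplane $H' \cong \pp^{r-1}$, as produced by Lemma~\ref{foo}; one has to check that suitable $d_1, d_2$ with $d_1 \leq r$, $d_2 \geq r - 1$, $d_1 + d_2 = d$ exist, which follows from $d \geq g + r \geq r + 1$ (take $d_1 = \min(d - r + 1, r)$, say, and verify $d_2 \geq r-1$ in each range).

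Having fixed such a degeneration, I would apply the exact sequence from the introduction, with a general hyperplane $H \subset \pp^r$ and the curve $C = X \cup Y$:
\[0 \to \mathcal{I}_{(X \cap H)/H}(1) \to \mathcal{I}_{(C \cap H)/H}(2) \to \mathcal{I}_{(Y \cap H)/(H \cap H')}(2) \to 0.\]
By semicontinuity it suffices to prove the $H^0$- or $H^1$-vanishing for this special $C$; via the long exact sequence, this reduces to controlling the outer two terms. The right-hand term concerns $Y \cap H$, which is a general hyperplane section (inside $H \cap H' \cong \pp^{r-2}$) of the nonspecial BN-curve $Y \subset \pp^{r-1}$, so the inductive hypothesis of the $m = 2$ hyperplane maximal rank theorem in $\pp^{r-1}$ applies. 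The left-hand term concerns $X \cap H$, which for a rational normal curve is just $d_1$ general points on a hyperplane $\pp^{r-1}$, and their behavior with respect to linear forms (i.e.\ $m - 1 = 1$) is elementary: $d_1 \leq r$ general points in $\pp^{r-1}$ are always in linearly general position, so $\mathcal{I}_{(X\cap H)/H}(1)$ has the expected cohomology. Then one checks that the expected values of $h^0$ and $h^1$ of the middle term are compatible with those of the two ends — i.e.\ the relevant "expected" numbers add up correctly, so that maximal rank of the two restriction maps forces maximal rank of the restriction map for $C$. This is where Lemma~\ref{obvious} enters, to guarantee that the specialized configuration $X \cup Y$ is genuinely a degeneration of a general BN-curve and that $X \cap Y$ can be taken general; combined with $H^1(N_{X \cup Y}) = 0$ from Lemma~\ref{foo}, semicontinuity is justified.

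The main obstacle I anticipate is the bookkeeping in the last step: verifying that the numerical "expected" dimensions actually match up across the exact sequence. One has $\dim H^0(\oo_H(2)) = \binom{r+1}{2}$, which splits as $\dim H^0(\oo_H(1)) = r$ (forms vanishing nowhere extra, matching the $X \cap H$ contribution after twisting down) plus $\dim H^0(\oo_{H\cap H'}(2)) = \binom{r}{2}$ (the $Y \cap H$ contribution); and $\deg(C \cap H) = d = d_1 + d_2$ similarly splits. One must confirm that with these splittings, the condition $d \leq \binom{r+1}{2}$ (resp.\ $\geq$) distributes so that the two sub-statements we are assuming by induction cover exactly the range needed — in particular there is no "boundary" value of $d$ where, say, the middle map is forced to be neither injective nor surjective by a mismatch. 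A subtlety is that the right-hand side $Y \cap H$ might itself land in an exceptional case of the $m=2$ theorem in lower dimension; one needs $d_2$ chosen so that $Y$ is nonspecial (so $d_2 \geq g_Y + (r-1)$, which the construction in Lemma~\ref{foo} arranges), placing it outside the $m = 2$, $d < g + r$ exceptional locus. Assembling these numerical checks carefully, while routine, is the crux; the geometric input is all already available from Lemmas~\ref{glone}, \ref{foo}, and~\ref{obvious} and Corollary~\ref{cor:r}.
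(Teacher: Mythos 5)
Your proposal is correct and follows essentially the same route as the paper: induction on $r$ with base case Corollary~\ref{cor:r}, the degeneration $X \cup Y$ supplied by Lemma~\ref{foo} with a rational normal curve $X$ and a nonspecial $Y \subset H'$, generalization via Lemma~\ref{obvious}, and the restriction exact sequence; your choice $d_1 = \min(d-r+1, r)$ is equivalent in effect to the paper's splitting $d_1 = r$, $d_2 \geq \binom{r}{2}$ when $d \geq \binom{r+1}{2}$ and $d_1 \leq r$, $d_2 \leq \binom{r}{2}$ otherwise. The ``bookkeeping'' you flag is exactly the check the paper performs, and it goes through with your choice of $(d_1, d_2)$.
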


\begin{proof}
We use induction on $r$;
when $r = 3$, this is a consequence of Corollary~\ref{cor:r}.
For the inductive step, write $d = \sum \deg C_i$,
and let $(d_1, d_2)$ be nonnegative integers
with $d = d_1 + d_2$, such that
\begin{align*}
d_1 \geq r \tand d_2 \geq \binom{r}{2} &\qquad \text{if} \quad d \geq \binom{r + 1}{2}, \\
d_1 \leq r \tand d_2 \leq \binom{r}{2} &\qquad \text{if} \quad d \leq \binom{r + 1}{2}.
\end{align*}

Pick a hyperplane $H'$ transverse to $H$.
By Corollary~\ref{mns}, 
we may specialize the $C_i$ to curves $X_i \cup Y_i$
such that $\sum \deg X_i = d_1$ and $\sum \deg Y_i = d_2$;
and such that
\[X := X_1 \cup X_2 \cup \cdots \cup X_n \subset \pp^r \tand Y := Y_1 \cup Y_2 \cup \cdots \cup Y_n \subset H'\]
are each unions
of independantly general
BN-curves or rational normal curves.
Since the hyperplane section of a rational normal curve
is a general set of points,
our inductive hypothesis in combination with Lemma~\ref{addsub}
implies
\[H^0(\oo_{H \cap H'}(2)) \to H^0(\oo_{Y \cap H}(2))\]
is of maximal rank. Define 
\[i = \begin{cases}
0 & \text{if}\ d \geq \binom{r + 1}{2}, \\
1 & \text{if}\ d \leq \binom{r + 1}{2};
\end{cases}\]
so we want to show
\[H^i(\mathcal{I}_{(C_1 \cup C_2 \cup \cdots \cup C_n) \cap H / H}(2)) = 0,\]
and know by induction that
\[H^i(\mathcal{I}_{Y \cap H / (H \cap H')}(2)) = 0\]

By direct examination,
$H^i(\mathcal{I}_{X \cap H / H}(1)) = 0$.
Consequently, we may use the
exact sequence of sheaves
\[0 \to \mathcal{I}_{X \cap H / H}(1) \to \mathcal{I}_{(C_1 \cup C_2 \cup \cdots \cup C_n) \cap H / H}(2) \to \mathcal{I}_{Y \cap H / (H \cap H')}(2) \to 0,\]
which gives rise to the long exact sequence in cohomology
\[\cdots \to H^i(\mathcal{I}_{X \cap H / H}(1)) \to H^i(\mathcal{I}_{(C_1 \cup C_2 \cup \cdots \cup C_n) \cap H / H}(2)) \to H^i(\mathcal{I}_{Y \cap H / (H \cap H')}(2)) \to \cdots,\]
to conclude that
$H^i(\mathcal{I}_{(C_1 \cup C_2 \cup \cdots \cup C_n) \cap H / H}(2)) = 0$
as desired.
\end{proof}

\subsection{\boldmath The Condition $d \geq g + r$}

The condition $d \geq r$ is necessary; indeed
when $d < g + r$, the map
will sometimes fail to be of maximal rank,
as shown by the following proposition:

\begin{prop} \label{fail} Let $C \subset \pp^r$ be any curve of degree $d$
and genus $g$, with $d < g + r$ and $4d - 2g < r(r + 3)$.
Then the restriction map
\[H^0(\oo_H(2)) \to H^0(\oo_{C \cap H}(2))\]
fails to be of maximal rank.
\end{prop}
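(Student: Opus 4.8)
The restriction map in question --- call it $\rho$ --- fits into the exact sequence $0 \to \mathcal{I}_{(C\cap H)/H}(2) \to \oo_H(2) \to \oo_{C\cap H}(2) \to 0$, so, since $H^1(\oo_H(2)) = 0$, it has kernel $H^0(\mathcal{I}_{(C\cap H)/H}(2))$ and cokernel $H^1(\mathcal{I}_{(C\cap H)/H}(2))$; thus $\rho$ fails to be of maximal rank exactly when \emph{both} of these are nonzero. Writing $\Gamma = C\cap H$, the plan is to establish both nonvanishings, and we may assume throughout that $r \geq 3$, that $C$ is nondegenerate and integral, and that $H$ is general.

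\emph{The cokernel.} Multiplication by a linear form $h$ cutting out $H$ gives $0 \to \mathcal{I}_{C/\pp^r}(1) \to \mathcal{I}_{C/\pp^r}(2) \to \mathcal{I}_{\Gamma/H}(2) \to 0$. Since every quadric on $H$ extends to $\pp^r$, the image of $\rho$ coincides with that of $H^0(\oo_{\pp^r}(2)) \to H^0(\oo_\Gamma(2))$, which factors through $H^0(\oo_C(2)) \to H^0(\oo_\Gamma(2))$; as $\oo_C(2)(-\Gamma) \cong \oo_C(1)$, the image of this last map has dimension $h^0(\oo_C(2)) - h^0(\oo_C(1))$. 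Hence $h^1(\mathcal{I}_{\Gamma/H}(2)) = d - \dim(\text{image of }\rho) \geq d - h^0(\oo_C(2)) + h^0(\oo_C(1))$, which by Riemann--Roch equals $h^1(\oo_C(1)) - h^1(\oo_C(2))$. Because $d < g+r$ and $C$ is nondegenerate, $h^1(\oo_C(1)) = h^0(\oo_C(1)) - (d+1-g) \geq g+r-d \geq 1$, so $|\omega_C(-1)|$ is nonempty; a general hyperplane section avoids its base locus, so $h^0(\omega_C(-1)) > h^0(\omega_C(-2))$, i.e.\ $h^1(\oo_C(1)) - h^1(\oo_C(2)) \geq 1$. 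Therefore $h^1(\mathcal{I}_{\Gamma/H}(2)) \geq 1$; note this step uses only the hypothesis $d < g+r$.

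\emph{The kernel.} Quadrics through $C$ restrict to quadrics through $\Gamma$ in $H$, and this restriction $H^0(\mathcal{I}_{C/\pp^r}(2)) \to H^0(\mathcal{I}_{\Gamma/H}(2))$ is injective: a reducible quadric $h\ell$ vanishing on the nondegenerate $C$ forces $\ell = 0$. So $h^0(\mathcal{I}_{\Gamma/H}(2)) \geq h^0(\mathcal{I}_{C/\pp^r}(2))$, and it suffices to prove that $C$ lies on a quadric. Since $h^0(\mathcal{I}_{C/\pp^r}(2)) \geq \binom{r+2}{2} - h^0(\oo_C(2))$ and the hypothesis $4d - 2g < r(r+3)$ rewrites (using $r(r+3) = 2\binom{r+2}{2} - 2$) as $2d + 1 - g \leq \binom{r+2}{2} - 1$, we are done whenever $H^1(\oo_C(2)) = 0$, for then $h^0(\oo_C(2)) = 2d+1-g < \binom{r+2}{2}$. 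In particular, combined with the Euler-characteristic identity $h^0(\mathcal{I}_{\Gamma/H}(2)) - h^1(\mathcal{I}_{\Gamma/H}(2)) = \binom{r+1}{2} - d$, the proposition follows outright whenever $d \leq \binom{r+1}{2}$.

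\emph{The main obstacle.} What remains is the case $d > \binom{r+1}{2}$ with $\oo_C(2)$ special (so $d \leq g-1$). Then $\oo_C(1)$ is a very ample special bundle, whence $d \geq 2r$ by Clifford, and Clifford also gives $h^0(\oo_C(2)) \leq d+1$, which still yields a quadric through $C$ unless $d$ is essentially $\binom{r+2}{2} - 1$ and $\oo_C(2)$ is, up to a twist, $\omega_C$ --- that is, unless $\oo_C(1)$ is (nearly) a theta-characteristic with $h^0 = r+1$. Ruling this out --- equivalently, showing that the half-canonical model of such a $C$ can never be $2$-normal, so that $C$ still lies on a quadric --- is the heart of the matter and needs input well beyond Riemann--Roch and Clifford: the classification of very ample bundles attaining Clifford's bound, together with the geometry of theta-characteristics and their multiplication maps.
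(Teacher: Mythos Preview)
Your argument is more complicated than needed and, by your own admission, incomplete. The paper avoids the separate cokernel computation entirely: it simply bounds the kernel of $\rho$ from below by the space $H^0(\mathcal{I}_{C/\pp^r}(2))$ of quadrics in $\pp^r$ through $C$, restricted (injectively, since $C$ is nondegenerate) to $H$. Taking $h^0(\oo_C(2)) = 2d+1-g$, one has
\[
h^0(\mathcal{I}_{C/\pp^r}(2)) \;\geq\; \binom{r+2}{2} - (2d+1-g) \;=\; \frac{r(r+3)-(4d-2g)}{2},
\]
which is positive by one hypothesis; and this quantity rewrites as $\bigl(\binom{r+1}{2} - d\bigr) + (g+r-d)$, hence \emph{strictly} exceeds $h^0(\oo_H(2)) - h^0(\oo_{C\cap H}(2)) = \binom{r+1}{2} - d$ by the other. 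A kernel that is both positive-dimensional and strictly larger than this difference forces $\rho$ to be neither injective nor surjective --- a single dimension count gives both nonvanishings at once. Your detour through $h^1(\oo_C(1)) > h^1(\oo_C(2))$, while correct, is unnecessary.

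As for your ``main obstacle'': the paper writes $\dim H^0(\oo_C(2)) = 2d+1-g$ without comment, i.e.\ it tacitly assumes $h^1(\oo_C(2)) = 0$ --- automatic whenever $d \geq g$, and in particular for the general BN-curves the paper actually cares about. So the case you flagged is not handled by the paper's proof either; the heavy machinery you invoke (Clifford-extremal classification, theta-characteristics) is never brought in, and the phrase ``any curve'' in the statement should be read in that light rather than pursued to the end you describe.
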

\begin{proof}
We compute
\[\dim H^0(\oo_{\pp^r}(2)) - \dim H^0(\oo_C(2)) = \binom{r + 2}{2} - (2d + 1 - g) = \frac{r(r + 3) - (4d - 2g)}{2} > 0,\]
and so $C$ lies on a quadric. Moreover,
we have
\begin{align*}
\dim H^0(\oo_{\pp^r}(2)) - \dim H^0(\oo_C(2)) &= \frac{r(r + 3) - (4d - 2g)}{2} \\
&= \binom{r + 1}{2} - d + (g + r - d) \\
&= \dim H^0(\oo_H(2)) - \dim H^0(\oo_{C \cap H}(2)) + (g + r - d) \\
&> \dim H^0(\oo_H(2)) - \dim H^0(\oo_{C \cap H}(2)).
\end{align*}

Now every quadric containing $C$ restricts to a quadric in $H$
containing $H \cap C$;
as $C$ is nondegenerate, this restriction has no kernel.
Consequently, there is a subspace of
$H^0(\oo_H(2))$ in the kernel of
$H^0(\oo_H(2)) \to H^0(\oo_{C \cap H}(2))$
which is of positive dimension that exceeds
$\dim H^0(\oo_H(2)) - \dim H^0(\oo_{C \cap H}(2))$.
In other words, $H^0(\oo_H(2)) \to H^0(\oo_{C \cap H}(2))$
is not of maximal rank.
\end{proof}

When $n = 1$, the cases
in Proposition~\ref{fail} are the only
cases in which the restriction map
$H^0(\oo_H(2)) \to H^0(\oo_{C \cap H}(2))$
fails to be of maximal rank.

Indeed, if $C$ is a general BN-curve with $d < g + r$,
then $C$ is linearly normal, i.e.\ $H^1(\mathcal{I}_C(1))$ vanishes.
Now consider the exact sequence of sheaves
\[0 \to \mathcal{I}_C(1) \to \oo_{\pp^r}(1) \oplus \mathcal{I}_C(2) \to \mathcal{I}_{C \cap H}(2) \to 0;\]
this induces a long exact sequence of cohomology groups:
\[\cdots \to H^0(\oo_{\pp^r}(1)) \oplus H^0(\mathcal{I}_C(2)) \to H^0(\mathcal{I}_{C \cap H}(2)) \to H^1(\mathcal{I}_C(1)) \to \cdots\]
It follows that
$H^0(\oo_{\pp^r}(1)) \oplus H^0(\mathcal{I}_C(2)) \to H^0(\mathcal{I}_{C \cap H}(2))$
is surjective, i.e.\ every quadric $Q \subset H$
containing $C\cap H$ is the intersection with $H$ of a quadric $\tilde{Q}\subset \pp^r$
containing $C$.
For $4d - 2g \geq r(r + 3)$, the maximal
rank conjecture for quadrics
(see \cite{qb} or \cite{jp})
implies that $C$ is not contained in any quadric,
and consequently that $C \cap H$ is not contained in any quadric.

\section{Construction of Reducible Curves \label{sec:glue}}

In this section, which is the heart of the proof,
we will construct examples
of reducible BN-curves $X \cup Y$ where $Y \subset H'$.
These reducible curves will be the essential
ingredient in applying the inductive method of
Hirschowitz in the following section to deduce
the hyperplane maximal rank theorem.

%\begin{lm} \label{duh}
%Let $x_1 \geq y_1$ and $x_2 \geq y_2$. Suppose that
%$y_1 + y_2 < x_1 + x_2 - 2(z - 1)$. Then there exists
%$(x_1', x_2')$ equal either to $(x_1, x_2 - z)$ or $(x_1 - z, x_2)$
%such that $x_1' \geq y_1$ and $x_2' \geq y_2$.
%\end{lm}
%\begin{proof}
%Otherwise, $x_2 - z < y_2$ and $x_1 - z < y_1$;
%i.e.,\ $x_2 - (z - 1) \leq y_2$ and $x_1 - (z - 1) \leq y_1$.
%Adding these together contradicts our assumption that $y_1 + y_2 < x_1 + x_2 - 2(z - 1)$.
%\end{proof}

\begin{lm} \label{gluea}
Let $H' \subset \pp^r$ be a hyperplane, and
$(d, g)$ be integers with $\rho(d, g, r) \geq 0$ and $d \geq g + r - 2$.
Assume $d_1$ and $d_2$ are positive integers with
$d = d_1 + d_2$, that additionally satisfy:
\[d_1 \geq r + \max(0, g + r - d) \tand d_2 \geq r - 1.\]
Then there exist nonspecial BN-curves $X \subset \pp^r$
and $Y \subset H'$ of degrees $d_1$ and $d_2$ respectively,
with $X \cap Y$ general, such that
$X \cup Y \subset \pp^r$ is a BN-curve
of genus $g$ with $H^1(N_{X \cup Y}) = 0$.
\end{lm}
\begin{proof}
We will argue by induction on $d$ and $\rho(d, g, r)$.
Notice that our inequalities for $d_1$ and $d_2$
imply $d \geq 2r - 1$; for the base case, we consider when $d = 2r - 1$
or $\rho(d, g, r) = 0$.

If $d = 2r - 1$, we take $X$ to be a rational normal curve of degree $r$,
and $Y \subset H$ to be a rational normal of degree $r - 1$
that meets $X \cap H$ in $g + 1$ points.
(Note that as $\rho(2r - 1, g, r) \geq 0$, we have $g + 1 \leq r$.)
By inspection, $X \cup Y$ is of genus $g$;
as $\aut H$ acts $(r + 1)$-transitively on points in linear
general position, $X \cap Y$ is general.
Moreover, $X \cup Y$ is a BN-curve with $H^1(N_{X \cup Y}) = 0$
by Lemma~\ref{glone}.

If $\rho(d, g, r) = 0$ and $d \geq g + r - 2$, then
either $(d, g) = (2r, r + 1)$
or $(d, g) = (3r, 2r + 2)$.
In the case $(d, g) = (2r, r + 1)$, we take $X$ to be the union
of a rational normal curve $R$ of degree $r$ with a $2$-secant line $L$,
and $Y$ to be a rational normal curve of degree $r - 1$
passing through $X \cap H$.
Again, by inspection $X \cup Y$ is of genus $r + 1$;
as $\aut H$ acts $(r + 1)$-transitively on points in linear
general position, $X \cap Y$ is general.
To see that $X \cup Y$ is a BN-curve with $H^1(N_{X \cup Y}) = 0$,
we apply Lemma~\ref{glone}
to the decomposition $X \cup Y = (Y \cup L) \cup R$.

Now suppose that $(d, g) = (3r, 2r + 2)$.
If $d_2 = r - 1$, then we take $X = C \cup L$
to be the union of a canonical curve $C$ with
a general $1$-secant line $L$.
We take $Y$ to be the rational normal curve of degree $r - 1$
passing through $L \cap H'$ and through $r + 1$ points of $C \cap H'$.
By inspection $X \cup Y$ is of genus $2r + 2$.
To see that $X \cap Y$ is general, first note
that since
$\aut H$ acts $(r + 1)$-transitively on points in linear
general position, $C \cap Y$ is general;
moreover, $L \cap H$ is general with respect to $C$.
To see that $X \cup Y$ is a BN-curve, we apply
Lemma~\ref{glone} to the decomposition $X \cup Y = C \cup (L \cup Y)$,
while noting that $L \cup Y$ is the specialization of a rational
normal curve of degree $r$.
Moreover, by Lemma~\ref{glonered}, we have $H^1(N_{X \cup Y}) = 0$.

Otherwise, we have $d_2 \geq r$ and $d_1 \geq r + 2$;
in this case we take
$X = R_1 \cup L_0 \cup L_1 \cup N_1$ and
$Y = R_2 \cup L_2 \cup N_2$, where:
\begin{enumerate}
\item $R_1$ is a general rational normal curve of degree $r$.
\item $L_0$ is a general $2$-secant line to $R_1$.
\item $R_2$ is a general rational normal curve of degree $r - 1$
passing through all $r + 1$ points of $(R_1 \cup L_0) \cap H$.
\item \label{l1} $L_1$ is a general line meeting $R_1$ once and $L_0$ once.
\item \label{l2} $L_2$ is a general $2$-secant line to $R_2$, passing through $L_1 \cap H$.
\item $N_1$ is a general rational normal curve of degree $d_1 - r - 2$
meeting $L_1$ once and $R_1$ in $d_1 - r - 2$ points
(we take $N_1 = \emptyset$ if $d_1 = r + 2$).
\item $N_2$ is a general rational normal curve of degree $d_2 - r$
meeting $L_2$ once and $R_2$ in $d_2 - r$ points
(we take $N_2 = \emptyset$ if $d_2 = r$).
\end{enumerate}

In order for this to make sense, we need
conditions \ref{l1} and \ref{l2} to be consistent.
The consistency of \ref{l1} and \ref{l2}, as well as
the assertion that $X \cap Y$ is general, both follow
from the following two claims:
\begin{itemize}
\item $L_1 \cap H$ is general
relative to $(R_1 \cup L_0) \cap H$.
This follows from $L_1 \cap R_1$ being general
relative to $L_0$ and $R_1 \cap H$,
which in turn follows from the existence of
a rational normal curve of degree $r$ through a
general collection of $r + 3$ points.
\item The $2$-secant lines to $R_2$ sweep out $H$
as we vary $R_2$ over all rational normal
curves of degree $r - 1$ passing through
all $r + 1$ points of $(R_1 \cup L_0) \cap H$.
This follows from the observation that $R_2$
sweeps out $H$, which again follows
from the existence of
a rational normal curve of degree $r - 1$ through a
general collection of $r + 2$ points in $H'$.
\end{itemize}

By inspection, $X \cup Y$ is a curve of genus $g$
and $X$ and $Y$ are nonspecial. 
To show that $X \cup Y$ is a BN-curve, we apply Lemma~\ref{glone}
to the decomposition
\[X \cup Y = (L_0 \cup R_2) \cup R_1 \cup (L_1 \cup L_2 \cup N_1 \cup N_2).\]
Similarly, to show $H^1(N_{X \cup Y}) = 0$,
we apply Lemma~\ref{glone} and then Lemma~\ref{gltwo}
to the decomposition
\[X \cup Y = (L_0 \cup R_2) \cup R_1 \cup L_2 \cup N_1 \cup N_2 \cup L_1.\]
To apply Lemma~\ref{gltwo}, we
need to check that the tangent lines to $(L_0 \cup R_2) \cup R_1 \cup L_2 \cup N_1 \cup N_2$
at the points of intersection with $L_1$ do not all lie
in a plane. Since $L_1$ intersects $L_0$,
the only possible plane that could
contain all $3$ tangents is $\overline{L_0L_1}$.
But as this plane contains the two points
of intersection of $L_0$ with $R_1$ and a plane
can only intersect a rational normal curve
at $3$ points with multiplicity,
the tangent line to $R_1$ at $L_1 \cap R_1$
cannot be contained in this plane.
Consequently, we may apply Lemma~\ref{gltwo} as claimed.

For the inductive step, we have $d \geq 2r$ and
$\rho(d, g, r) > 0$.
We claim that these inequalities imply that
\begin{equation} \label{nonspec}
r + \max(0, g + r - d) + r - 1 < d = d_1 + d_2.
\end{equation}
Of course,
\[r + \max(0, g + r - d) + r - 1 = \max(2r - 1, 3r - 1 + g - d);\]
consequently, as $2r - 1 < 2r \leq d$,
it suffices to show $3r - 1 + g - d < d$, or equivalently
$g < 2d + 1 - 3r$. To see this, note that if
$g \geq 2d + 1 - 3r$, then we would have
\[-(r - 1)(d - 2r) = (r + 1)d - r(2d + 1 - 3r) - r(r + 1) \geq (r + 1)d - rg - r(r + 1) > 0,\]
which is a contradiction; thus, $g < 2d + 1 - 3r$, and so
\eqref{nonspec} holds.
Consequently, there exists $(d_1', d_2')$
either equal to $(d_1 - 1, d_2)$ or to $(d_1, d_2 - 1)$,
such that $d_1' \geq r + \max(0, g + r - d)$
and $d_2' \geq r - 1$. (Otherwise $d_1 - 1 < r + \max(0, g + r - d)$
and $d_2 - 1 < r - 1$, i.e.\ $d_1 \leq r + \max(0, g + r - d)$
and $d_2 \leq r - 1$; adding these contradicts \eqref{nonspec}.)

If we define $g' = \max(0, g - 1)$,
then $\max(0, g + r - d) = \max(0, g' + r - (d - 1))$.
Thus by the inductive hypothesis, there are BN-curves
$X' \subset \pp^r$
and $Y' \subset H'$ of degrees $d_1'$ and $d_2'$ respectively,
with $X' \cap Y'$ general, such that
$X' \cup Y' \subset \pp^r$ is a BN-curve
of genus $g'$ with $H^1(N_{X' \cup Y'}) = 0$.
To complete the inductive step, we take
\[(X, Y) = \begin{cases}
(X', Y' \cup L) & \text{if $d_1' = d_1$;} \\
(X' \cup L, Y') & \text{if $d_2' = d_2$;} \\
\end{cases} \twhere L = \begin{cases}
\text{a $1$-secant line} & \text{if $g' = g$;} \\
\text{a $2$-secant line} & \text{if $g' \neq g$.}
\end{cases}
\]
This satisfies the desired conclusion by Lemma~\ref{glone}.
\end{proof}

\begin{lm} \label{glue}
Let $H' \subset \pp^r$ be a hyperplane, and
$(d, g)$ be integers with $\rho(d, g, r) \geq 0$.
Assume $d_1$ and $d_2$ are positive integers with
$d = d_1 + d_2$, that additionally satisfy:
\[d_1 \geq r + \max(0, g + r - d) \tand d_2 \geq r - 1.\]
Then there exists BN-curves $X \subset \pp^r$
and $Y \subset H'$ of degrees $d_1$ and $d_2$ respectively,
with $X \cap Y$ general, such that
$X \cup Y \subset \pp^r$ is a BN-curve
of genus $g$ with $H^1(N_{X \cup Y}) = 0$.
Moreover, we can take $X$ to be nonspecial if
\begin{equation} \label{ceil}
d_2 \geq (r - 1) \cdot \left\lceil \frac{\max(0, g + r - d)}{2}\right\rceil.
\end{equation}
\end{lm}

\begin{proof}
We will argue by induction on $d$. When $d \geq g + r - 2$, we are done by Lemma~\ref{gluea}.
Thus we may assume that $d < g + r - 2$.
In particular, this implies that $d \geq 4r$, and that
$\max(0, g + r - d) = g + r - d$.
We claim that
\begin{equation}\label{spec}
r + \max(0, g + r - d) + r - 1 = 3r - 1 + g - d < d - 2(r - 2) = d_1 + d_2 - 2(r - 2).
\end{equation}
This is equivalent to $g < 2d + 5 - 5r$; to see this, note that if
$g \geq 2d + 5 - 5r$, then
\[-(r - 1)(d - 4r) - 2r = (r + 1)d - r(2d + 5 - 5r) - r(r + 1) \geq (r + 1)d - rg - r(r + 1) = 0,\]
which is a contradiction; thus, $g < 2d + 5 - 5r$, and so
\eqref{spec} holds.
Consequently, there exists $(d_1', d_2')$
either equal to $(d_1 - 1, d_2 - r + 1)$ or to $(d_1 - r, d_2)$,
such that
\[d_1' \geq r + \max(0, g + r - d) - 1 = r + \max(0, (g - r - 1) + r - (d - r))\]
and $d_2' \geq r - 1$.
(Otherwise $d_1 - r < r + \max(0, g + r - d) - 1$
and $d_2 - r + 1 < r - 1$, i.e.\ $d_1 - (r - 2) \leq r + \max(0, g + r - d)$
and $d_2 - (r - 2) \leq r - 1$; adding these contradicts \eqref{spec}.)

Thus by the inductive hypothesis, there are BN-curves
$X' \subset \pp^r$
and $Y' \subset H'$ of degrees $d_1'$ and $d_2'$ respectively,
with $X' \cap Y'$ general, such that
$X' \cup Y' \subset \pp^r$ is a BN-curve
of genus $g - r - 1$ with $H^1(N_{X' \cup Y'}) = 0$.
To complete the inductive step, we take
\[(X, Y) = \begin{cases}
(X' \cup L, Y' \cup R_2) & \text{if $d_1' = d_1 - 1$;} \\
(X' \cup R_1, Y') & \text{if $d_2' = d_2$.} \\
\end{cases}\]
Here, $R_1$ is a rational normal curve of degree $r$ that is
$(r + 2)$-secant to $X'$,
and $L$ is a $1$-secant line to $X'$, and
$R_2$ is a rational normal curve of degree $r - 1$
intersecting $Y'$ in $r + 1$ points and passing through $L \cap H$.

Tracing through the proof, we notice then when \eqref{ceil}
is satisfied, we add a $1$-secant line to $X$
at least as many times as we add an $(r + 2)$-secant rational
normal curve of degree $r$.
In particular, when \eqref{ceil} holds,
the curve $X$ we constructed is
nonspecial.
\end{proof}

\begin{lm} \label{above}
Let $H' \subset \pp^r$ be a hyperplane, and
$(d, g)$ be integers with $\rho(d, g, r) \geq 0$.
Write
\[d_1 = 1 + \max(0, g + r - d) \tand d_2 = d - d_1.\]
Then there exists a rational curve $X \subset \pp^r$,
and a BN-curve $Y \subset H'$, of degrees $d_1$ and $d_2$ respectively,
with $X \cap Y$ general, such that
$X \cup Y \subset \pp^r$ is a BN-curve
of genus $g$ with $H^1(N_{X \cup Y}) = 0$.
\end{lm}
\begin{proof}
We argue by induction on $\rho(d, g, r)$.

When $\rho(d, g, r) = 0$, then $(d, g, r) = (r(t + 1), (r + 1)t, r)$
for some nonnegative integer $t$;
so for $\rho(d, g, r) = 0$ we may argue by induction on $t$.
When $t = 0$, we let $X$ be a line, and $Y$ be a rational normal curve of degree $r - 1$
in $H'$, passing through $X \cap H'$;
by Lemma~\ref{glone}, the union $X \cup Y$ is 
is a BN-curve
with $H^1(N_{X \cup Y}) = 0$.
For the inductive step, we let $X' \subset \pp^r$
and $Y' \subset H'$ be of degrees $d_1 - 1$ and $d_2 - r + 1$ respectively,
with $X' \cap Y'$ general, such that
$X' \cup Y' \subset \pp^r$ is a BN-curve
of degree $d - r$ and genus $g - r - 1$ with $H^1(N_{X' \cup Y'}) = 0$.
We then pick a general point $p \in H'$, and let
\[X = X' \cup L \tand Y = Y' \cup R,\]
where $L$ is a $1$-secant line to $X'$ through $p$,
and $R$ is a rational normal curve of degree $r - 1$
which is $(r + 1)$-secant to $Y'$ and passes through $p$.
Applying Lemmas~\ref{glone} and~\ref{glonered},
we conclude that the union $X \cup Y$ is
is a BN-curve of genus $g$
with $H^1(N_{X \cup Y}) = 0$ as desired.

For the inductive step, we let
$X' \subset \pp^r$
and $Y' \subset H'$ be of degrees $d_1$ and $d_2 - 1$ respectively,
with $X' \cap Y'$ general, such that
$X' \cup Y' \subset \pp^r$ is a BN-curve
of degree $d - 1$ and
genus $g' := \max(0, g - 1)$ with $H^1(N_{X' \cup Y'}) = 0$.
We then take
\[X = X' \tand Y = Y' \cup L,\]
where $L$ is a line which is $1$-secant to $Y'$ if $g = g'$
and $2$-secant otherwise.
Applying Lemma~\ref{glone},
we conclude that the union $X \cup Y$ is
is a BN-curve of genus $g$
with $H^1(N_{X \cup Y}) = 0$ as desired.
\end{proof}

\begin{lm} \label{key}
Let $C_1, C_2, \ldots, C_n \subset \pp^r$ be independantly general BN-curves,
of degrees $d_i$ and genera $g_i$,
and $H, H' \subset \pp^r$ be transverse hyperplanes. Let $d'$ and $d''$
be nonnegative integers with
\[d' + d'' = \sum d_i \tand d' \geq r - 1 + \sum [1 + \max(0, g_i + r - d_i)].\]
Then we may specialize the $C_i$ to curves $C_i^\circ$
with $\sum \# (C_i^\circ \cap H \cap H') = d''$, so that
\begin{gather*}
C_1^\circ \cap H \cap H', C_2^\circ \cap H \cap H', \ldots, C_n^\circ \cap H \cap H' \subset H \cap H' \quad \text{and} \\
C_1^\circ \cap H \smallsetminus H', C_2^\circ \cap H \smallsetminus H', \ldots, C_n^\circ \cap H \smallsetminus H' \subset H
\end{gather*}
are sets of subsets of hyperplane sections of independantly general
BN-curves.

Moreover, we can assume the second of these sets is a set of subsets
of hyperplane sections of independantly general
nonspecial BN-curves if
\begin{equation} \label{ceil}
d'' \geq (r - 1) \cdot \sum \left\lceil \frac{\max(0, g_i + r - d_i)}{2}\right\rceil.
\end{equation}
\end{lm}
\begin{proof}
We first note that it suffices to consider the case where all $C_i$ are special.
Indeed, if $C_1, C_2, \ldots, C_m$ are special,
and $C_{m + 1}, C_{m + 2}, \ldots, C_n$ are nonspecial,
then the result for $C_1, C_2, \ldots, C_n$
follows from the result for $C_1, C_2, \ldots, C_m$
combined with Corollary~\ref{mns} for
$C_{m + 1}, C_{m + 2}, \ldots, C_n$.

We may thus suppose $C_i$ is special for all $i$.
In particular, for all $i$,
\begin{equation} \label{dimin}
d_i \geq d_i - [(r + 1) d_i - r g_i - r(r + 1)] = r + r(g_i + r - d_i).
\end{equation}

We now argue by induction on $n$.
When $n = 1$, this follows from Lemmas~\ref{glue} and~\ref{obvious}
if $d'' \geq r - 1$.
If $d'' \leq r - 2$, then by the uniform position principle, the points
of $C_1 \cap H$ are in linear general position.
We may therefore apply an automorphism of $H$ so that exactly $d''$
of these points lie in $H'$.

For the inductive step, note that Equation~\eqref{dimin} gives,
in combination with $g_n + r - d_n \geq 1$,
\[d_n - r - \max(0, g_n + r - d_n) \geq (r - 1) \cdot \left\lceil\frac{\max(0, g_n + r - d_n)}{2}\right \rceil \geq r - 1.\]
In particular, so long as
\[2r - 2 + \sum [1 + \max(0, g_i + r - d_i)] \leq d' \leq d - r + 1,\]
we may combine our inductive hypothesis (for $C_1, \ldots, C_{n - 1}$)
with Lemmas~\ref{glue} and~\ref{obvious} (for $C_n$) to deduce the result.

If $d' \geq d - r + 2$, the result follows from our inductive hypothesis
(for $C_1, \ldots, C_{n - 1}$); we do not specialize $C_n$.

Finally, if $r - 1 + \sum [1 + \max(0, g_i + r - d_i)] \leq d' \leq 2r - 2 + \sum [1 + \max(0, g_i + r - d_i)]$,
then upon rearrangement,
\[d' - [1 + \max(0, g_n + r - d_n)] \leq 2r - 2 + \sum_{i < n} [1 + \max(0, g_n + r - d_n)],\]
so by combining Lemmas~\ref{above} and~\ref{obvious}(for $C_n$)
with
our inductive hypothesis (for $C_1, \ldots, C_{n - 1}$),
it suffices to show
$2r - 2 + \sum_{i < n} [1 + \max(0, g_n + r - d_n)] \leq \sum_{i < n} d_i$,
which follows in turn from
\[d_1 \geq 2r - 1 + \max(0, g_1 + r - d_1),\]
which in turn follows from Equation~\eqref{dimin}
together with $g_n + r - d_n \geq 1$.
\end{proof}

\section{The Inductive Argument \label{sec:ind}}

In this section, we combine the results of
the previous three sections to inductively
prove the hyperplane maximal rank theorem.
This essentially boils down to manipulating inequalities
to show that we can choose the integers $(d', d'')$ appearing in the previous
section in the appropriate fashion.

We begin by giving some bounds on the expressions appearing
in Lemma~\ref{glue} that are easier to manipulate.

\begin{lm} \label{ione}
Let $d$, $g$, and $r$ be integers with $\rho(d, g, r) \geq 0$. Then
\[1 + \max(0, g + r - d) \leq \frac{d}{r} \tand (r - 1) \cdot \left\lceil \frac{\max(0, g + r - d)}{2}\right\rceil \leq \frac{r - 1}{2r} \cdot d.\]
\end{lm}
\begin{proof}
By assumption,
\[r \cdot (g + r - d) \leq r \cdot (g + r - d) + (r + 1)d - rg - r(r + 1) = d - r\]
\[\Rightarrow \max(0, g + r - d) \leq \frac{d - r}{r}.\]
Substituting this in, we find
\begin{align*}
1 + \max(0, g + r - d) &\leq 1 + \frac{d - r}{r} = \frac{d}{r} \\
\left\lceil \frac{\max(0, g + r - d)}{2}\right\rceil &\leq \frac{\frac{d - r}{r} + 1}{2} = \frac{r - 1}{2r} \cdot d. && \qedhere
\end{align*}
\end{proof}

\begin{lm} \label{itwo}
Let $d$, $r$, and $m$ be integers with
\[r \geq 4, \quad m \geq 3, \tand d \geq 2r + 2.\]
Assume that
\[d \geq \binom{m + r - 1}{m}, \quad \text{respectively} \quad d \leq \binom{m + r - 1}{m}.\]
Then there are integers $d'$ and $d''$ such that $d = d' + d''$ and
\[d' \geq \binom{m + r - 2}{m - 1} \tand d'' \geq \binom{m + r - 2}{m},\]
\[\text{respectively} \quad d' \leq \binom{m + r - 2}{m - 1} \quad \text{and} \quad d'' \leq \binom{m + r - 2}{m},\]
which moreover satisfy
\[d' \geq r - 1 + \frac{d}{r} \tand d'' \geq r - 1.\]
Additionally, if $m = 3$, we can replace
$d'' \geq r - 1$ by the stronger assumption that
\[d'' \geq \frac{r - 1}{2r} \cdot d.\]
\end{lm}
\begin{proof}
First we consider the case where
\[d = \binom{m + r - 1}{m} \geq \binom{r + 2}{3} \geq 2r + 2.\]
In this case, we take
\[d' = \binom{m + r - 2}{m - 1} \tand d'' = \binom{m + r - 2}{m}.\]
To see that these satisfy the given conditions, first note that
\[d'' \geq \binom{r + 1}{3} \geq r - 1.\]
Next note that
\[\binom{m + r - 1}{m} \geq \frac{r - 1}{\frac{m}{m + r - 1} - \frac{1}{r}};\]
indeed, the LHS is an increasing function of $m$, the RHS is a decreasing function of $m$, and the inequality is obvious for $m = 3$. Rearranging, we get
\[d' = \binom{m + r - 2}{m - 1} = \frac{m}{m + r - 1} \cdot \binom{m + r - 1}{m} \geq r - 1 + \frac{1}{r} \cdot \binom{m + r - 1}{m} = r - 1 + \frac{d}{r}.\]
If $m = 3$, then
\[d'' = \binom{r + 1}{3} \geq \frac{r - 1}{2r} \cdot \binom{r + 2}{3} = \frac{r - 1}{2r} \cdot d.\]

In general, we induct upwards on $d$ in the $\geq$
case and downwards on $d$ in the $\leq$ case.
To do this, we want to show that if $d'$ and $d''$
satisfy
\[d' \geq r - 1 + \frac{d}{r} \tand d'' \geq \frac{r - 1}{2r} \cdot d \twhere d = d' + d'' \geq 2r + 2,\]
then either $(d' - 1, d'')$ or $(d', d'' - 1)$,
as well as either $(d' + 1, d'')$ or $(d', d'' + 1)$,
satisfy the above two conditions.
We note that
\begin{align*}
d' \geq r - 1 + \frac{d}{r} = r - 1 + \frac{d' + d''}{r} \quad &\Leftrightarrow \quad (r - 1) d' \geq r(r - 1) + d''. \\
d'' \geq \frac{r - 1}{2r} \cdot d = \frac{r - 1}{2r} \cdot (d' + d'') \quad &\Leftrightarrow \quad (r + 1) d'' \geq (r - 1) d'.
\end{align*}

Assume (to the contrary) that neither $(d' - 1, d'')$
nor $(d', d'' - 1)$ satisfy the conditions,
respectively that neither $(d' + 1, d'')$ 
nor $(d', d'' + 1)$ satisfy the conditions.
Then we must have
\begin{align*}
(r - 1) (d' - 1) < r(r - 1) + d'' &\tand (r + 1)(d'' - 1) < (r - 1) d', \\
\text{respectively} \quad (r - 1) d' < r(r - 1) + d'' + 1 &\tand (r + 1)d'' < (r - 1) (d' + 1).
\end{align*}
Equivalently, we must have
\begin{align*}
(r - 1) (d' - 1) + 1 \leq r(r - 1) + d'' &\tand (r + 1)(d'' - 1) + 1 \leq (r - 1) d', \\
\text{respectively} \quad (r - 1) d' \leq r(r - 1) + d'' &\tand (r + 1)d'' + 1 \leq (r - 1) (d' + 1).
\end{align*}
Adding twice the first equation to the second, we must have
\begin{align*}
2(r - 1) (d' - 1) + 2 + (r + 1)(d'' - 1) + 1 &\leq 2r(r - 1) + 2d'' + (r - 1) d', \\
\text{respectively} \quad 2(r - 1) d' + (r + 1)d'' + 1 &\leq 2r(r - 1) + 2d'' + (r - 1) (d' + 1).
\end{align*}
Simplifying yields
\[(r - 1)(d' + d'') \leq 2r^2 + r - 4, \quad \text{respectively} \quad (r - 1) (d' + d'') \leq 2r^2 - r - 2.\]
In particular,
\[d = d' + d'' \leq \frac{2r^2 + r - 4}{r - 1} = 2r + 3 - \frac{1}{r - 1} \quad \Rightarrow \quad d \leq 2r + 2.\]
Consequently, we can reach via upward and downward induction every
value of $d$ that is at least $2r + 2$.
\end{proof}

\begin{proof}[Proof of the Hyperplane Maximal Rank Theorem.]
We use induction on $m$ and $r$. For $m = 2$, this is a consequence of
Proposition~\ref{m2}; for $r = 3$, this is a consequence of
Corollary~\ref{cor:r}.
Note that if $\sum d_i \leq 2r - 1$, then all the $C_i$ are nonspecial and
so $H^0(\oo_H(2)) \to H^0(\oo_{C \cap H}(2))$ is surjective;
consequently, $H^0(\oo_H(m)) \to H^0(\oo_{C \cap H}(m))$
is surjective for all $m \geq 2$.
Thus, we may suppose $\sum d_i \geq 2r$.

For the inductive step, we define integers $(d', d'')$
as follows.
If $\sum d_i \in \{2r, 2r + 1\}$, we take $(d', d'') = (r + 1, \sum d_i - r - 1)$.
Otherwise, for $\sum d_i \geq 2r + 2$, we let $(d', d'')$
be as in Lemma~\ref{itwo}. Fix another hyperplane $H'$ transverse to $H$.
By Lemma~\ref{key}, plus Lemma~\ref{ione} when $d \geq 2r + 2$,
we may specialize the $C_i$ to curves $C_i^\circ$
with $\sum \# (C_i^\circ \cap H \cap H') = d''$, so that
\begin{gather*}
X := (C_1^\circ \cap H \cap H') \cup (C_2^\circ \cap H \cap H') \cup  \cdots \cup (C_n^\circ \cap H \cap H') \subset H \cap H' \quad \text{and} \\
Y := (C_1^\circ \cap H \smallsetminus H') \cup (C_2^\circ \cap H \smallsetminus H') \cup \cdots \cup (C_n^\circ \cap H \smallsetminus H') \subset H
\end{gather*}
are unions of subsets of hyperplane sections of independantly general
BN-curves.
Moreover, if $m = 3$, then we can arrange for $X$
to be a union of subsets of hyperplane sections of independantly general
nonspecial BN-curves.
By our inductive hypothesis, Lemma~\ref{addsub}, and the uniform position principle,
we know that the restriction maps
\[H^0(\oo_H(m - 1)) \to H^0(\oo_Y(m - 1)) \tand H^0(\oo_{H \cap H'}(m)) \to H^0(\oo_X(m))\]
are of maximal rank.

Define
\[i = \begin{cases}
0 & \text{if}\ \sum d_i \geq \binom{r + m - 1}{m}, \\
1 & \text{if}\ \sum d_i \leq \binom{r + m - 1}{m};
\end{cases}\]
so we want to show $H^i(\mathcal{I}_{(X \cup Y) \cap H}(m)) = 0$.
The exact sequence of sheaves
\[0 \to \mathcal{I}_{(X \cap H) / H}(m - 1) \to \mathcal{I}_{(X \cup Y) \cap H / H}(m) \to \mathcal{I}_{(Y \cap H)/(H \cap H')}(m) \to 0,\]
gives rise to a long exact sequence in cohomology
\[\cdots \to H^i(\mathcal{I}_{(X \cap H) / H}(m - 1)) \to H^i(\mathcal{I}_{(X \cup Y) \cap H / H}(m)) \to H^i(\mathcal{I}_{(Y \cap H)/(H \cap H')}(m)) \to \cdots.\]
By the inductive hypothesis, we have
$H^i(\mathcal{I}_{(X \cap H) / H}(m - 1)) = H^i(\mathcal{I}_{(Y \cap H)/(H \cap H')}(m)) = 0$.
Consequently, $H^i(\mathcal{I}_{(X \cup Y) \cap H / H}(m)) = 0$, as desired.
\end{proof}

\bibliography{mrcbib}{}
\bibliographystyle{plain}

\end{document}